\newtheorem{theorem}{Theorem}
\newtheorem{lemma}[theorem]{Lemma}
\newcommand{\supp}{\operatorname{supp}}
\newcommand{\NN}{\mathbb{N}}
\title[1-cohomology is complemented]{Group 1-cohomology is complemented}
\author{Piotr W. Nowak}
\address{Institute of Mathematics, Polish Academy of Sciences, Warsaw, Poland -- and -- Institute of Mathematics, University of Warsaw, Poland}
\email{pnowak@impan.pl}
\thanks{The author was  partially supported by Narodowe Centrum Nauki grant DEC-2013/10/EST1/00352}
\begin{document}

\maketitle

\begin{abstract}
We show  
a structural property of cohomology with coefficients in an isometric representation on a uniformly convex Banach space:
if the  cohomology group $H^1(G,\pi)$ is reduced, then, up to an isomorphism, 
it is a closed complemented, subspace of the space of cocycles and its complement is the subspace of coboundaries. 
 \end{abstract}

Cohomology with coefficients in Banach modules has recently become an important object of study due to its use in the formulation 
of Banach space version of Kazhdan's property $(T)$, a property that the group 1-cohomology $H^1(G,\pi)$ with coefficients in
any isometric representation of $G$ on a fixed Banach space vanishes. See \cite{nowak-handbook} for a recent survey.
This notion generalizes the one of classical Kazhdan's property $(T)$ in the setting of 
unitary representations on Hilbert spaces, see \cite{bhv}. In that case several structural properties of cohomology
are obvious from the Hilbert space structure, however their Banach space counterparts are often non-trivial.
An example of such a property is complementability of closed 
subspaces appearing in the cohomological framework.
An important case is that of degree 0: the subspace of invariant vectors $E^\pi$ (i.e., the space of $0$-cocycles) 
of an isometric
representation $\pi$ of $G$ on $E$ 
is complemented in the representation space $E$,
whenever $E$ is reflexive \cite{brs,bfgm}. In that case the complement is a $\pi$-invariant subspace $E_\pi$. However, 
there are no other decompositions of this type known in higher degrees outside the setting of Hilbert spaces.

The purpose of this note is to prove the existence of a direct sum decomposition in degree 1. 
Consider a symmetric probability measure $\mu$ on $G$, whose support generates $G$ and which is given by a 
continous compactly supported density on $G$. By $A_\pi^\mu:E\to E$ denote the Markov operator associated to $\pi$ and $\mu$ 
via the Bochner integral
$$A_\pi^\mu v=\int_G \pi_gv \,d\mu.$$
By $Z^1(G,\pi)$ and $B^1(G,\pi)$ denote the space of $1$-cocycles and $1$-coboundaries, respectively.

\begin{theorem}\label{theorem : main}
Let $G$ be a locally compact, compactly generated group and $E$ be a Banach space. Assume that $\pi$ is a representation 
of $G$ on $E$, such that for some probability measure $\mu$ as above on $G$ the associated
Markov operator satisfies $\Vert A_\pi^\mu \Vert\le\lambda<1$. Then, up to isomorphism,
\begin{equation}\label{equation: direct sum decomposition}
Z^1(G,\pi)=B^1(G,\pi) \oplus H^1(G,\pi).
\end{equation}

In particular, \eqref{equation: direct sum decomposition} holds for every isometric representation $\pi$ of $G$
without almost invariant vectors on
a uniformly convex Banach space $E$.
\end{theorem}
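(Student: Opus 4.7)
The hypothesis $\|A_\pi^\mu\|\le\lambda<1$ has two immediate consequences that drive the whole argument. First, $I-A_\pi^\mu$ is invertible on $E$ by a Neumann series, with bounded inverse. Second, $\pi$ has no nonzero invariant vectors, so the coboundary map $d\colon E\to Z^1(G,\pi)$, $dv(g)=\pi_g v-v$, is injective with image $B^1(G,\pi)$. My plan is to construct an explicit continuous linear projection $P\colon Z^1(G,\pi)\to Z^1(G,\pi)$ with image $B^1(G,\pi)$; its kernel will then be a closed complement which, via the quotient map, is isomorphic to $H^1(G,\pi)$.

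To build $P$, introduce the $\mu$-averaging map $\Phi\colon Z^1(G,\pi)\to E$ given by $\Phi(b)=\int_G b(g)\,d\mu(g)$. Because $\mu$ has a continuous, compactly supported density, $\Phi$ is continuous in the Fr\'echet topology of uniform convergence on compact subsets of $G$. A direct computation from the definition yields the key identity
\[
\Phi\circ d=A_\pi^\mu-I,
\]
which by the spectral gap assumption is invertible on $E$; write $S:=(A_\pi^\mu-I)^{-1}$. Now define $P(b):=d(S\Phi(b))$. By construction the image of $P$ lies in $B^1(G,\pi)$, and $P\circ d=d\circ S\circ(A_\pi^\mu-I)=d$, so $P$ is a continuous projection of $Z^1(G,\pi)$ onto $B^1(G,\pi)$. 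One also checks that $B^1(G,\pi)$ is closed in $Z^1(G,\pi)$: if $dv_n\to b$, then $(A_\pi^\mu-I)v_n=\Phi(dv_n)\to\Phi(b)$, so $v_n\to S\Phi(b)=:v$ and $b=dv$.

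The decomposition \eqref{equation: direct sum decomposition} then falls out: since $P$ is a continuous projection with closed image, $\ker P=\{b\in Z^1(G,\pi):\Phi(b)=0\}$ (the subspace of ``$\mu$-harmonic'' cocycles) is a closed complement, and the quotient map restricts to a topological isomorphism $\ker P\cong H^1(G,\pi)$. For the concluding assertion, I would invoke the standard Bader--Furman--Gelander--Monod type fact that on a uniformly convex Banach space an isometric representation without almost invariant vectors has $\|A_\pi^\mu\|<1$ for any symmetric $\mu$ whose support generates $G$, so that the main hypothesis is satisfied.

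The technical heart of the proof is the identity $\Phi\circ d=A_\pi^\mu-I$: it is precisely what converts the spectral gap hypothesis into a projection formula, and everything else is a matter of continuity bookkeeping. I anticipate no serious obstacle beyond that, though some care is needed to confirm that $\Phi$ is continuous on the chosen Fr\'echet topology on $Z^1(G,\pi)$ and that the resulting projection meshes with the quotient topology on $H^1(G,\pi)$ via the open mapping theorem.
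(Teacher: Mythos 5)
Your proposal is correct and is essentially the paper's own argument written in closed form: the paper obtains $b(z)=(I-A_\pi^\mu)^{-1}z^\mu$ as the unique fixed point of the affine contraction $v\mapsto A_\pi^\mu v+z^\mu$ and sets $Pz=d_\pi b(z)$, which is exactly your $P=d\circ(A_\pi^\mu-I)^{-1}\circ\Phi$ up to the opposite sign convention for the codifferential. Expressing the inverse by a Neumann series rather than by the Banach fixed point iteration makes linearity and continuity of $P$ immediate rather than requiring separate lemmas, but the projection itself, its kernel of $\mu$-harmonic cocycles, and the final appeal to the uniform-convexity spectral gap (which the paper takes from Dru\c{t}u--Nowak for \emph{admissible} measures, rather than for arbitrary symmetric generating $\mu$ as you assert) are the same.
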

We also show that certain equivariance in the above decomposition corresponds precisely to vanishing of cohomology.

\subsection*{Acknowledgements} I would like to thank the Referee for suggesting several improvements.

\section{Proofs}
Let $\pi$ be a strongly continuous representation of a compactly generated group $G$ on a Banach space $E$.
Let $Z^1(G,\pi)$ be the linear space of continuous cocycles for $\pi$ and let $S$, satisfying $\supp \mu\subseteq S$, be a  fixed compact symmetric generating set. We also assume that the support of $\mu$ generates $G$. 

A specific class of \emph{admissible measures} that is relevant for our purposes was introduced in \cite{drutu-nowak}.
A probability measure $\mu$ on $G$ is said to be admissible if it is defined by a density function $\rho$ (with respect to the Haar measure $dH$), satisfying 
$$\rho=\dfrac{\alpha+\beta}{\int_G \alpha+\beta\,dH },$$
where $\alpha,\beta$ are continuous, non-negative, compactly supported functions on $G$, such that 
\begin{enumerate}
\item $\alpha(e)>0$, where $e$ denotes the identity element of $G$;
\item $\int_G\alpha\ dH=1$;
 \item for every generator $s\in S$ we have $s\cdot \beta\ge \alpha$.
\end{enumerate}
Admissibility of a measure is a convenient tool when estimating spectral gaps of the associated Markov operator \cite{drutu-nowak}. 

A cocycle for $\pi$ is determined on the generators and we can define a norm on $Z^1(G,\pi)$ by the formula
$$\lVert z \rVert_S= \sup_{s\in S} \Vert z_s \Vert_E.$$
Let $d_\pi:E\to B^1(G,\pi)$ be the codifferential defined by $d_\pi v(g)=v-\pi_gv$. The subspace of coboundaries
$B^1(G,\pi)$ is the range of $d_\pi$ in $Z^1(G,\pi)$.

The representation $\pi$ does not have almost invariant vectors if there exists $\kappa=\kappa(\pi, S)>0$ such 
that $\sup_{s\in S}\Vert \pi_s v-v\Vert \ge \kappa \Vert v\Vert$ for every $v\in E$. In particular, 
if $\pi$ does not have almost invariant vectors then $B^1(G,\pi)$ is closed in the $\Vert \cdot \Vert_S$
norm in $Z^1(G,\pi)$. In that case the cohomology $H^1(G,\pi)=Z^1(G,\pi)/B^1(G,\pi)=Z^1(G,\pi)/\overline{B^1(G,\pi)}$
is said to be reduced.

\begin{proof}[Proof of Theorem \ref{theorem : main}] 
Let $\mu,\nu$ be  probability measures on $G$ as before, let $\mu*\nu$ denote the convolution  and $\mu^n$, $n\in \NN$, denote the convolution powers of $\mu$.
Let $z^{\mu}=\int_G z_g\,d\mu(g)$.
 Define an affine operator $L_{\pi,z}^{\mu}:E\to E$ by the formula
$$L_{\pi,z}^{\mu} v=A_\pi^\mu v + z^{\mu}.$$
\begin{lemma} Let $z,z'\in Z^1(G,\pi)$. Then 
\begin{enumerate}
\item $(z+z')^{\mu}=z^{\mu}+(z')^{\mu}$,
\item $L_{\pi,z+z'}^{\mu}=L_{\pi,z}^{\mu}+L_{\pi,z'}^{\mu}-A_\pi^\mu$,
\item $A_\pi^\mu z^{\nu}+z^{\mu}=z^{\mu*\nu}$,
\item $L_{\pi,z}^{\mu *\nu} =L_{\pi,z}^{\mu} L_{\pi,z}^{\nu}$. 
\end{enumerate}
\end{lemma}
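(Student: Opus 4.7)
The four identities follow from three elementary inputs: linearity of the Bochner integral, the cocycle identity $z_{gh}=z_g+\pi_g z_h$, and the multiplicativity $A_\pi^\mu A_\pi^\nu = A_\pi^{\mu*\nu}$ (itself a Fubini computation). The plan is to dispatch (1) and (2) by inspection, then establish (3) as the only real content, and deduce (4) from (3).

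I would first note that (1) is immediate linearity of $v\mapsto\int_G v\,d\mu$. For (2), evaluating each side on $v\in E$ and substituting (1) both give $A_\pi^\mu v + z^\mu+(z')^\mu$, so the identity holds pointwise.

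The substantive step is (3). I would write
\[
z^{\mu*\nu}=\int_G z_x\,d(\mu*\nu)(x)=\int_G\int_G z_{gh}\,d\mu(g)\,d\nu(h),
\]
apply the cocycle identity $z_{gh}=z_g+\pi_g z_h$ to the integrand, and use Fubini to split the expression into $\int_G z_g\,d\mu(g)\cdot\int_G d\nu(h) = z^\mu$ (since $\nu$ is a probability measure) and $\int_G \pi_g\bigl(\int_G z_h\,d\nu(h)\bigr)\,d\mu(g) = A_\pi^\mu z^\nu$, where in the second term I pull the inner Bochner integral past $\pi_g$.

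Finally (4) is a short chain: for $v\in E$,
\[
L_{\pi,z}^\mu L_{\pi,z}^\nu v = A_\pi^\mu(A_\pi^\nu v+z^\nu)+z^\mu = A_\pi^{\mu*\nu}v + A_\pi^\mu z^\nu + z^\mu = A_\pi^{\mu*\nu}v + z^{\mu*\nu} = L_{\pi,z}^{\mu*\nu}v,
\]
using multiplicativity of $A_\pi$ in the second equality and (3) in the third. The only potential obstacle is justifying Fubini and interchanging $\pi_g$ with a Bochner integral in (3); this is routine given continuity of $z$, strong continuity of $\pi$, and compactness of the supports of $\mu,\nu$, so there is no real difficulty.
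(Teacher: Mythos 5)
Your proposal is correct and is essentially the paper's own argument: (1) and (2) by linearity, (3) via the cocycle identity $z_{gh}=z_g+\pi_g z_h$ and Fubini applied to the double integral defining $z^{\mu*\nu}$ (the paper just runs the same computation in the opposite direction), and (4) by combining (3) with $A_\pi^{\mu*\nu}=A_\pi^\mu A_\pi^\nu$. Nothing further is needed.
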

\begin{proof}
The first two properties are clear, we only prove the remaining two. We have
\begin{align*}
A_\pi^\mu z^{\nu}+z^{\mu} &= \int_G \left( \int_G \pi_g z_h d\nu(h)\right) \,d\mu(g)+\int_G z_g \,d\mu(g)\\
&=\int_G\int_G z_{gh}\,d\nu(h)\,d\mu(g)\\
&=z^{\mu*\nu}.
\end{align*}
The last property is then a consequence of the equality
\begin{align*}
L_{\pi,z}^{\mu*\nu}v&= A_\pi^{\mu*\nu}v+z^{\mu*\nu} \\
&= A_\pi^\mu A_\pi^\nu v+A_\pi^\mu z^\nu + z^{\mu}\\
&=A_\pi^\mu(A_\pi^\nu v+ z^\nu) +z^{\mu}.
\end{align*}
\end{proof}

Under the assumptions of Theorem \ref{theorem : main} the map $L_{\pi,z}^{\mu}$ is a strict contraction. Indeed,
\begin{align*}
\left\lVert L_{\pi,z}^{\mu} v-L_{\pi,z}^{\mu} w\right\rVert &= \left\lVert A_\pi^\mu v + z^{\mu} -  A_\pi^\mu w- z^{\mu} \right\rVert\\
&\le \left\lVert A_\pi^\mu\right\rVert \left \lVert v-w\right\rVert.
\end{align*}
Since $\left\lVert A_\pi^\mu\right\rVert\le \lambda<1$, by the Banach Contraction Principle  $L_{\pi,z}^{\mu}$ has a  a unique fixed point 
$b(z)\in E$ such that $b(z)=L_{\pi,z}^{\mu} b(z).$
The assignment $z\mapsto b(z)$ is a map $b:Z^1(G,\pi)\to E$. Define a map $P:Z^1(G,\pi)\to B^1(G,\pi)$,
$$Pz=d_\pi b(z).$$
We claim that $P$ is a bounded projection onto $B^1(G,\pi)$.
\begin{lemma}
If $\left\lVert A_\pi^\mu \right\rVert<1$ then  $P$ is linear.
\end{lemma}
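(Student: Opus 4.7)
The plan is to deduce linearity of $P$ from linearity of the fixed-point assignment $b \colon Z^1(G,\pi) \to E$, since $d_\pi$ is manifestly linear and $P = d_\pi \circ b$. So the task reduces to verifying $b(z+z') = b(z) + b(z')$ and $b(\lambda z) = \lambda b(z)$ for $z, z' \in Z^1(G,\pi)$ and scalars $\lambda$.

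For additivity, I would evaluate $L^\mu_{\pi, z+z'}$ at the candidate $b(z) + b(z')$ and show the result equals $b(z) + b(z')$. Using the individual fixed-point identities $b(z) = A^\mu_\pi b(z) + z^\mu$ and $b(z') = A^\mu_\pi b(z') + (z')^\mu$, together with part (1) of the preceding lemma, $(z+z')^\mu = z^\mu + (z')^\mu$, the check is immediate:
\begin{align*}
L^\mu_{\pi, z+z'}(b(z)+b(z'))
&= A^\mu_\pi(b(z)+b(z')) + z^\mu + (z')^\mu \\
&= (A^\mu_\pi b(z) + z^\mu) + (A^\mu_\pi b(z') + (z')^\mu) \\
&= b(z) + b(z').
\end{align*}
Since $\|A^\mu_\pi\| \le \lambda < 1$, the Banach contraction principle gives uniqueness of the fixed point of $L^\mu_{\pi, z+z'}$, which forces $b(z+z') = b(z)+b(z')$. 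For homogeneity I would observe that $(\lambda z)^\mu = \lambda z^\mu$, check that $\lambda b(z)$ is a fixed point of $L^\mu_{\pi, \lambda z}$, and again invoke uniqueness.

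There is essentially no real obstacle here; the argument is a one-line application of the uniqueness clause in the Banach fixed-point theorem combined with the trivial linear dependence of $z \mapsto z^\mu$. The only conceptual point worth flagging is why the strict inequality $\|A^\mu_\pi\| < 1$, rather than $\le 1$, enters at this stage: it is needed to single out $b(z)$ unambiguously, since without uniqueness the selection $z \mapsto b(z)$ could in principle fail to respect the linear structure of $Z^1(G,\pi)$.
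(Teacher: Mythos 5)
Your proof is correct, and it takes a slightly different route from the paper's. The paper computes $b(z+z')$ as a limit of iterates: it uses the identity $L_{\pi,z+z'}^{\mu}=L_{\pi,z}^{\mu}+L_{\pi,z'}^{\mu}-A_\pi^\mu$ from the preceding lemma, iterates it to get $\left(L_{\pi,z+z'}^{\mu}\right)^n v=\left(L_{\pi,z}^{\mu}\right)^n v+\left(L_{\pi,z'}^{\mu}\right)^n v-\left(A_\pi^\mu\right)^n v$, and passes to the limit, using $\left\lVert A_\pi^\mu\right\rVert<1$ to kill the term $\left(A_\pi^\mu\right)^n v$. You instead verify directly that $b(z)+b(z')$ satisfies the fixed-point equation for $L_{\pi,z+z'}^{\mu}$ and invoke the uniqueness clause of the contraction principle; the strict inequality enters through uniqueness rather than through decay of $\left(A_\pi^\mu\right)^n$. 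Your version is arguably cleaner, since it avoids the (unstated) induction behind the iterated operator identity, and you also explicitly check homogeneity, which the paper leaves implicit. Both arguments rest on the same hypothesis in the same essential way, so nothing is gained or lost in generality.
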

\begin{proof}
Recall that given $\mu$ and $z$, the fixed point $b(z)$ of $L_{\pi,z}^{\mu}$ is 
given by $$b(z)=\lim_{n\to \infty} \left(L_{\pi,z}^{\mu}\right)^n v=\lim_{n\to \infty}  z^{\left(\mu^n\right)}$$ for any $v\in E$.
Let $z,z'$ be cocycles for $\pi$.  Then
\begin{align*}
b(z+z') &= \lim_{n\to \infty} \left( L_{\pi,z+z'}^{\mu}\right)^nv\\
&= \lim_{n\to \infty} \left(L_{\pi, z}^{\mu}\right)^nv +\left(L_{\pi, z'}^{\mu}\right)^n v -\left(A_\pi^\mu\right)^nv \\
&= b(z)+b(z'),
\end{align*}
since $\left(A_\pi^\mu\right)^n v$ tends to 0 by the assumption that $\left\lVert A_\pi^\mu\right\rVert<1$.
\end{proof}

\begin{lemma}
If $\Vert A_\pi^\mu\Vert<1$ then $P$ is the identity on the range of $d_\pi$. 
\end{lemma}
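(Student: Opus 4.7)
The plan is to exploit uniqueness of the fixed point of $L_{\pi,z}^\mu$ by exhibiting an explicit candidate when $z$ is a coboundary.

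First I would take an arbitrary $z\in B^1(G,\pi)$, say $z=d_\pi v$ with $v\in E$, so that $z_g=v-\pi_g v$ for every $g\in G$. Integrating against $\mu$ via the Bochner integral, this gives immediately
\begin{equation*}
z^\mu=\int_G(v-\pi_g v)\,d\mu(g)=v-A_\pi^\mu v.
\end{equation*}

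Next I would plug $v$ itself into $L_{\pi,z}^\mu$:
\begin{equation*}
L_{\pi,z}^\mu v=A_\pi^\mu v+z^\mu=A_\pi^\mu v+v-A_\pi^\mu v=v,
\end{equation*}
so $v$ is a fixed point of $L_{\pi,z}^\mu$. Since $\|A_\pi^\mu\|<1$, the Banach Contraction Principle applies and guarantees that this fixed point is unique; hence $b(z)=v$. Applying $d_\pi$ then yields $Pz=d_\pi b(z)=d_\pi v=z$, which is exactly the desired statement.

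The argument is essentially a one-line verification once the correct candidate fixed point is spotted, so there is no real obstacle; the only subtle point is that $v$ is not a priori uniquely determined by $z$, but this does not affect the conclusion since $d_\pi v=z$ regardless of which representative $v$ is chosen. (In fact the strict contraction hypothesis forces $d_\pi$ to be injective, as any $\pi$-invariant vector $u$ would satisfy $\|u\|=\|A_\pi^\mu u\|\le\lambda\|u\|$ and hence vanish, but this stronger fact is not needed for the proof.)
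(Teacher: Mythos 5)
Your proof is correct and follows essentially the same route as the paper: both identify $v$ as the fixed point of $L_{\pi,z}^\mu$ when $z=d_\pi v$, the only cosmetic difference being that you verify $L_{\pi,z}^\mu v=v$ directly and invoke uniqueness, while the paper computes $b(z)=\lim_{n\to\infty}z^{(\mu^n)}=\lim_{n\to\infty}\bigl(v-(A_\pi^\mu)^n v\bigr)=v$. Your parenthetical remark that strict contractivity forces injectivity of $d_\pi$ is also correct, and as you note, not needed.
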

\begin{proof}
Assume $z_g=v-\pi_gv$. Then
$$z^{\left(\mu^n\right)}=\int_G v-\pi_g v\, d\mu^n=v-\left(A_\pi^\mu\right)^n v,$$
and the last term tends to 0, so that $b(z)=v$ and $d_\pi b(z)=z$.
\end{proof}

\begin{lemma}
$P$ is continuous.
\end{lemma}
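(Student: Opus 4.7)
The plan is to show that $b:Z^1(G,\pi)\to E$ is itself a bounded linear operator; then, since $d_\pi:E\to Z^1(G,\pi)$ is manifestly bounded by $\lVert d_\pi v\rVert_S\le 2\lVert v\rVert$, continuity of $P=d_\pi\circ b$ follows at once.

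Linearity of $b$ is almost free: the additivity $b(z+z')=b(z)+b(z')$ was established in the previous lemma, and homogeneity is immediate from $(\alpha z)^{(\mu^n)}=\alpha z^{(\mu^n)}$ and passing to the limit. For boundedness, the plan is to derive an explicit Neumann-type series for $b(z)$. Iterating property (3) of the first lemma with $\nu=\mu^{n-1}$ gives
\begin{equation*}
z^{(\mu^n)}=\sum_{k=0}^{n-1}\bigl(A_\pi^\mu\bigr)^k z^\mu,
\end{equation*}
and since $\lVert A_\pi^\mu\rVert\le\lambda<1$ this series converges absolutely as $n\to\infty$, so letting $n\to\infty$ yields
\begin{equation*}
b(z)=\sum_{k=0}^\infty \bigl(A_\pi^\mu\bigr)^k z^\mu.
\end{equation*}

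Next I would bound $\lVert z^\mu\rVert$ in terms of $\lVert z\rVert_S$. By the standard Bochner integral estimate $\lVert z^\mu\rVert\le \int_G \lVert z_g\rVert\,d\mu(g)$, and since $\supp\mu\subseteq S$ by the choice of $S$ at the start of the section, $\lVert z_g\rVert\le \lVert z\rVert_S$ for $\mu$-almost every $g$. Therefore $\lVert z^\mu\rVert\le \lVert z\rVert_S$, and consequently
\begin{equation*}
\lVert b(z)\rVert\le\sum_{k=0}^\infty\lambda^k\lVert z^\mu\rVert\le\frac{\lVert z\rVert_S}{1-\lambda}.
\end{equation*}
Combining with the trivial bound $\lVert d_\pi v\rVert_S\le 2\lVert v\rVert$ gives $\lVert Pz\rVert_S\le \tfrac{2}{1-\lambda}\lVert z\rVert_S$, so $P$ is bounded.

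There is no real obstacle here; the only point requiring care is matching norms, namely ensuring that $S$ has been chosen to contain $\supp\mu$ so that $\lVert z^\mu\rVert$ is controlled by $\lVert z\rVert_S$, and recognising $b(z)$ as the value of the geometric series in $A_\pi^\mu$ applied to $z^\mu$. Once the series representation and this estimate are in hand, continuity of $P$ is immediate.
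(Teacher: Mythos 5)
Your proof is correct and follows essentially the same route as the paper: both reduce to the estimate $\lVert b(z)\rVert \le \lVert z^\mu\rVert\sum_{k\ge 0}\lambda^k \le \lVert z\rVert_S/(1-\lambda)$ and then apply the trivial bound $\lVert d_\pi v\rVert_S\le 2\lVert v\rVert$. The only cosmetic difference is that you obtain the geometric series via the explicit Neumann-type formula $b(z)=\sum_{k=0}^\infty (A_\pi^\mu)^k z^\mu$ from property (3) of the first lemma, whereas the paper gets the same bound from the standard telescoping estimate for the fixed point of the contraction $L_{\pi,z}^{\mu}$.
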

\begin{proof}
We estimate
\begin{align*}
\left\lVert z^{\mu} - b(z)\right\rVert_E &\le \sum_{n=1}^{\infty} d\left(L_{\pi,z}^{\left( \mu^n\right)} 0, L_{\pi,z}^{\left(\mu^{n+1}\right)} 0\right)\\
&\le  \lVert z^\mu\rVert_E \left( \sum_{n=1}^{\infty}\lambda^n \right).
\end{align*}
Therefore $\lVert b(z)\rVert_E \le  \lVert z^\mu \rVert_E \left( 1+ \sum_{n=1}^{\infty}\lambda^n \right)
\le  \lVert z\rVert_S \left( 1+ \sum_{n=1}^{\infty}\lambda^n \right)$ and finally
$$\Vert Pz\Vert_S =\Vert d_\pi b(z)\Vert_S \le 2\Vert b(z)\Vert_E \le 2  \lVert z\rVert_S \left( 1+ \sum_{n=1}^{\infty}\lambda^n \right).$$
\end{proof}
Let $V$ be the complement of $B^1(G,\pi)$ in $Z^1(G,\pi)$, then $V$ is isomorphic to $H^1(G,\pi)= Z^1(G,\pi) /B^1(G,\pi)$.
This proves the first statement of Theorem \ref{theorem : main}.

To see that the assumptions are always 
in the case of an isometric representation on a uniformly convex Banach space recall that it was shown 
in \cite{drutu-nowak} that for an admissible measure $\mu$ we have 
$\left\lVert A_\pi^\mu\right\Vert<1$ whenever the representation $\pi$ does not have almost invariant vectors.
\end{proof}

The above estimate of the norm of $P$ is strictly greater than 1. It would be interesting to know whether the coboundaries are 
in fact 1-complemented in $Z^1(G,\pi)$ in the case when $H^1(G,\pi)\neq 0$.

Denoting by $\gamma\cdot v=\pi_\gamma v+z_\gamma$ the affine actions associated to $\pi$ and $z\in Z^1(G,\pi)$
we have that
the following conditions are equivalent:
\begin{enumerate}
\item $H^1(G,\pi)=0$,
\item $b(\gamma \cdot z)=b(z)$ for every $\gamma\in G$, where $(\gamma\cdot z)_g= z_{\gamma g}$.
\end{enumerate}
Indeed, it is straightforward to verify that 
$$\pi_\gamma b(z)=b(\gamma\cdot z)- z_\gamma.$$
Thus 
$$z_\gamma=b(z)-\pi_\gamma b(z)=(Pz)_\gamma$$
if and only if $b(\gamma\cdot z)=b(z)$ for every $\gamma\in G$.

In the case when $G$ has property $(T)$ every cocycle associated to a unitary representation of $G$ is a coboundary.
Equivalently, $G$ has property $(T)$ if and only if every affine isometric action of $G$ on a Hilbert space has a fixed point.
See e.g. \cite[Chapter 2]{bhv}.
Therefore the above 
equivalent conditions are satisfied for every unitary representation $\pi$ and every cocycle. 
\begin{bibdiv}
\begin{biblist}

\bib{bfgm}{article}{
   author={Bader, U.},
   author={Furman, A.},
   author={Gelander, T.},
   author={Monod, N.},
   title={Property (T) and rigidity for actions on Banach spaces},
   journal={Acta Math.},
   volume={198},
   date={2007},
   number={1},
   pages={57--105},
}

\bib{brs}{article}{
   author={Bader, U.},
   author={Rosendal, C.},
   author={Sauer, R.},
   title={On the cohomology of weakly almost periodic group representations},
   journal={J. Topol. Anal.},
   volume={6},
   date={2014},
   number={2},
   pages={153--165},

}

\bib{bhv}{book}{
   author={Bekka, B.},
   author={de la Harpe, P.},
   author={Valette, A.},
   title={Kazhdan's property (T)},
   series={New Mathematical Monographs},
   volume={11},
   publisher={Cambridge University Press, Cambridge},
   date={2008},
   pages={xiv+472},
}

\bib{drutu-nowak}{article}{
   author={Dru\c{t}u, C.},	
   author={Nowak, P. W.},
   title={Kazhdan projections, random walks and ergodic theorems},
   journal={arXiv:1501.03473 [math.GR]},
   date={2015},
   pages={40 pp.},
}

\bib{nowak-handbook}{article}{
   author={Nowak, P. W.},
   title={Group actions on Banach spaces},
   conference={
      title={Handbook of group actions. Vol. II},
   },
   book={
      series={Adv. Lect. Math. (ALM)},
      volume={32},
      publisher={Int. Press, Somerville, MA},
   },
   date={2015},
   pages={121--149},
}

\end{biblist}
\end{bibdiv}

\end{document}